\renewcommand{\leq}{\leqslant}
\renewcommand{\geq}{\geqslant}
\newcommand{\refpar}{Sect.~}
\newcommand{\lda}{\lambda}
\newcommand{\Wo}{{\raisebox{0.2ex}{\(\stackrel{\circ}{W}\)}}{}}
\newtheorem{theorem}{Theorem}
\newtheorem{proposition}{Proposition}
\theoremstyle{definition}
\newtheorem{definition}{Definition}
\newtheorem{remark}{Remark}
\newenvironment{enbibliography}{\vspace{-0.5cm}}
\begin{document} 
\title{On spectral asymptotics of the 
Neumann problem for the Sturm-Liouville 
equation with self-similar generalized 
Cantor type weight}
\author{N.~V.~Rastegaev \\ \small{Chebyshev Laboratory, St. Petersburg State University}
\\ \small{14th Line, 29b, Saint Petersburg, 199178 Russia} 
\\ \small{rastmusician@gmail.com}}
\maketitle



\section{Introduction}

We generalize some results of \cite{SV} and 
\cite{VSh3}, on the spectral asymptotics of the problem
\begin{gather}\label{eq:1.1}
    -y''-\lambda\rho y=0,\\ \label{eq:1.2}
    y'(0)=y'(1)=0,
\end{gather}
where the weight measure $\rho$ is a distributional derivative of a self-similar generalized Cantor type function
(in particular, $\rho$ is singular with respect to the Lebesgue measure). 
\begin{remark}
It is well known, that the change of the boundary conditions causes rank two perturbation of the quadratic form. It follows from the general variational theory 
(see \cite[\S10.3]{BS2}) that counting functions of the eigenvalues of boundary-value problems, related to the same equation, but different boundary conditions, cannot differ by more than 2.
\end{remark}
The problem of the eigenvalues asymptotic behavior for this problem goes back to the works of
M.~G.~Krein (see, for example, \cite{K}).

From \cite{BS1} it follows that if the measure $\rho$ contains absolutely
continuous component, its singular component does not influence
the main term of the spectral asymptotic.

In the case of singular measure $\rho$ it could be seen from \cite{B} that the counting function $N:(0,+\infty)\to\mathbb N$ of eigenvalues of boundary value problem for the operator $(-1)^ly^{(2l)}$ admits the estimate $o(\lda^{\frac{1}{2l}})$ instead of the usual asymptotics $N(\lda)\sim C\lda^{\frac{1}{2l}}$ in the case of measure containing a regular component.
Better lower bounds for eigenvalues were obtained for some special classes of measures in \cite{B}.

Exact power exponent $D$ of the counting function $N(\lda)$ in
the case of self-similar measure $\rho$ was established in \cite{F} (see also earlier works \cite{HU} and \cite{MR} for partial results,
concerning the classical Cantor ladder).

It is shown in \cite{SV} and \cite{KL} that the eigenvalues counting function of problem \eqref{eq:1.1}, \eqref{eq:1.2} has the asymptotics
\begin{equation}\label{eq:mes_asymp}
	N(\lambda)=\lambda^D\cdot\bigl(s(\ln\lambda)+o(1)\bigr),
	\qquad \lambda\to+\infty,
\end{equation}
where $D\in(0,\frac{1}{2})$ and $s$ is a continuous periodic function, dependent on the choice of the weight $\rho$.
In the case of non-arithmetic type of self-similarity (see Definition \ref{arithm} below) of the Cantor ladder primitive for $\rho$, the function $s$ degenerates into constant. In the case of arithmetic self-similarity it has a period $\nu$, which depends on the parameters of the ladder.

In the paper \cite{Naz} this result is generalized to the case of higher even order differential operator. Also it is conjectured in \cite{Naz}, that the function $s$ is not constant for arbitrary non-constant weight with arithmetically self-similar primitive.

The paper \cite{VSh1} gives computer-assisted proof of this conjecture in the simplest case, when the generalized primitive of weight $\rho$ is a classical Cantor ladder.

In \cite{VSh3} the conjecture was confirmed for ``even'' ladders (see Definition \ref{even} below). For such ladders the following theorem was proved.
\begin{theorem}\label{Th1}
The coefficient $s$ from the asymptotic \eqref{eq:mes_asymp} satisfies the relation
\[
	\forall t\in [0,\nu]\quad s(t)=e^{-Dt}\,\sigma(t),
\]
where $\sigma$ is some purely singular non-decreasing function.
\end{theorem}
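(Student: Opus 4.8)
The plan is to identify $\sigma$ as a renormalized limit of the eigenvalue counting function and to read off both required properties from its structure. Writing $\lambda=e^{t}$, the asymptotics \eqref{eq:mes_asymp} together with the $\nu$-periodicity of $s$ give, for every fixed $t$,
\[
    \sigma(t)=e^{Dt}s(t)=\lim_{k\to\infty}e^{-Dk\nu}\,N\!\left(e^{t+k\nu}\right),
\]
since $e^{-Dk\nu}N(e^{t+k\nu})=e^{Dt}\bigl(s(t+k\nu)+o(1)\bigr)=e^{Dt}\bigl(s(t)+o(1)\bigr)$. First I would use this representation to settle monotonicity: for each fixed $k$ the map $t\mapsto e^{-Dk\nu}N(e^{t+k\nu})$ is non-decreasing (a positive multiple of the non-decreasing counting function composed with increasing maps), and a pointwise limit of non-decreasing functions is non-decreasing. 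Hence $\sigma$ is non-decreasing on $[0,\nu]$; it is moreover continuous, because $s$ is continuous, so $d\sigma$ has no atoms, and the only thing left to prove is that its absolutely continuous part vanishes.

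Next I would record the exact self-similar relation behind \eqref{eq:mes_asymp}. Decoupling the equation \eqref{eq:1.1} along the self-similar pieces of the weight changes the eigenvalue count by at most a bounded amount (finitely many interface conditions, cf. the Remark on rank-two perturbations), so $N(\lambda)=\sum_i N(d_i\lambda)+O(1)$, where $d_i\in(0,1)$ are the spectral scaling factors and $\sum_i d_i^{D}=1$. Passing to the renormalized limit turns this into $\sigma(t)=\sum_i\sigma(t-\tau_i)$ with $\tau_i=-\ln d_i$. In the arithmetic case $\tau_i=k_i\nu$, and the quasi-periodicity $\sigma(t-k_i\nu)=d_i^{D}\sigma(t)$ makes this identity a tautology; consequently the leading-order self-similarity carries no information about the fine structure of $d\sigma$, and the singularity cannot be extracted from it. This is precisely where the even-ladder hypothesis (see Definition~\ref{even}) must enter.

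The core of the argument, and the step I expect to be the main obstacle, is to exploit the even-ladder structure to produce a genuine, non-tautological self-similar description of $d\sigma$ at scales finer than the period. Concretely, I would show that the spectrum possesses a self-similar band--gap structure: there is at least one macroscopic interval of $\lambda$ free of eigenvalues, and the self-similar reproduction of the weight forces a copy of this gap inside every spectral band at every scale. On such gaps $N$ is constant, so in the renormalized limit $\sigma$ is constant on a self-similar (Cantor-like) family of intervals. The even symmetry is what guarantees that one such gap is present and that the reproduction is clean, allowing the total gap length to be estimated; the aim is to show that this length exhausts the full Lebesgue measure of $[0,\nu]$, so that $\operatorname{supp}d\sigma$ is Lebesgue-null and $\sigma'=0$ almost everywhere.

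The delicate points I anticipate are: (i) making the band--gap decomposition rigorous, i.e.\ controlling the $O(1)$ remainder in the decoupling uniformly enough to survive the passage to the renormalized limit, where the \emph{measure} $d\sigma$ (not merely $\sigma$) must be reconstructed; and (ii) the measure estimate showing the gaps have full, rather than merely positive, Lebesgue measure. Once $\operatorname{supp}d\sigma$ is shown to be Lebesgue-null, purity of the singular part is immediate, and together with the monotonicity established above this yields the claimed representation $s(t)=e^{-Dt}\sigma(t)$ with $\sigma$ non-decreasing and purely singular.
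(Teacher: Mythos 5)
Your identification of $\sigma$ as the renormalized limit $\lim_k m^{-k}N(e^{t+k\nu})$ (your $e^{-Dk\nu}$ equals $m^{-k}$ by \eqref{period}) and the monotonicity argument match the paper. But the heart of the theorem --- pure singularity of $\sigma$ --- is not proved in your proposal: you explicitly defer the key step (``the step I expect to be the main obstacle'', ``the delicate points I anticipate'') to a band--gap strategy that you do not carry out. Moreover that strategy is pinned to the wrong hypothesis: you invoke ``the even-ladder hypothesis (see Definition~\ref{even})'', but this paper's Theorem~\ref{Th1} is precisely the generalization \emph{beyond} even ladders, to the class satisfying \eqref{eq:2.1}; nothing in the intended proof may rest on the even symmetry. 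You also never establish the existence of a macroscopic spectral gap, the decoupling identity $N(\lambda)=\sum_i N(d_i\lambda)+O(1)$ with controlled remainder, or the claim that the reproduced gaps exhaust full Lebesgue measure --- and, as you yourself observe, the leading-order renormalization identity is a tautology in the arithmetic case, so some genuinely new quantitative input is unavoidable.

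The paper supplies that input differently. It works with the step functions $\sigma_k(t)=m^{-k}N(e^{k\nu+t})$ and shows two things: (i) $\sigma_k$ has $O(m^k)$ jumps (from the exact spectral periodicity $\tau\lambda_{mn}=\lambda_n$ of Theorem~\ref{4.1}); and (ii) $\|\sigma_k-\sigma\|_{L_2[0,\nu]}=o(m^{-k})$. The estimate (ii) is where the real work lies: $\sigma_{k+1}$ and $\sigma_k$ agree except when $(k+1)\nu+t$ falls in $\bigcup_n[\ln\lambda_{mn},\ln\lambda_{m(n+1)-1}]$, and the total length of that union is shown to be finite by combining the exact periodicity, the one-sided quasiperiodicity $\tau\mu_{m(n+1)-1}\le\lambda_n$ of Theorem~\ref{4.2} (obtained by an oscillation/surgery construction, not by decoupling with an $O(1)$ error), and the summable eigenvalue comparison $\sum_n|\ln\mu_n-\ln\lambda_n|<\infty$ between different boundary conditions (Proposition~\ref{5.2}). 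Feeding (i) and (ii) into the criterion of Proposition~\ref{sing} --- $(\#\mathfrak A_k+2)\cdot\|\sigma-\sigma_k\|_{L_2}=o(1)$ implies pure singularity --- finishes the proof. If you want to salvage your route, you would at minimum need to prove the gap-reproduction and full-measure claims without the even-ladder symmetry; as written, the proposal leaves the essential conclusion unestablished.
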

Hence the relation $s(t)\neq const$ follows immediately.
This result is generalized in \cite{V3} to the case of 
the fourth order equations.

The aim of this paper is to generalize this result to a much broader class of ladders.

This paper has the following structure. \refpar{2} is a review, that provides the necessary definitions of self-similar functions of generalized Cantor type, derives their properties and defines the classes of functions under consideration. \refpar{3}
establishes the spectral periodicity for the Neumann problem, similar to \cite{VSh3}, and also a weaker variant of spectral ``quasiperiodicity'' for some other boundary value problems. Finally, in \refpar{4} Theorem \ref{Th1} is proved for the suggested class of ladders.

\section{Self-similar functions of generalized Cantor type}\label{par:2}
\noindent
Let $m\geq 2$, and let
$\{I_k = [a_k,b_k]\}_{k=1}^m$ be the sub-segments of $[0,1]$, without interior intersections. Denote by
$S_k(t) = a_k + (b_k-a_k)\,t$ the affine contractions of $[0,1]$ onto
$I_k$ keeping the orientation. We also introduce the set of positive values
$\{\rho_k\}_{k=1}^m$ such that
$\sum\limits_{k=1}^m\rho_k = 1$.

We define the operator $\mathcal{S}$ on the space $L_{\infty}(0,1)$ as follows:
\begin{equation*}
\mathcal{S}(f) = \sum\limits_{k=1}^m
\left(\chi_{I_k}(f\circ S_k^{-1})+\chi_{\{x>b_k\}} \right)\rho_k.
\end{equation*}

\begin{proposition}
\textbf{\textsc{(see, e.g. \cite[Lemma 2.1]{Sh})}}
$\mathcal{S}$ is a contractive mapping on $L_{\infty}(0,1)$.
\end{proposition}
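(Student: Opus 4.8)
The plan is first to confirm that $\mathcal{S}$ indeed maps $L_\infty(0,1)$ into itself, and then to estimate the distance $\|\mathcal{S}(f)-\mathcal{S}(g)\|_{L_\infty}$ for arbitrary $f,g\in L_\infty(0,1)$. Well-definedness is immediate: each summand is bounded (the rescaled copy $f\circ S_k^{-1}$ is bounded by $\|f\|_{L_\infty}$ and the indicator terms are bounded by $1$), so $\mathcal{S}(f)\in L_\infty(0,1)$. The decisive first observation is that the terms $\chi_{\{x>b_k\}}\rho_k$ do not depend on $f$, hence they cancel in the difference:
\[
    \mathcal{S}(f)-\mathcal{S}(g)=\sum_{k=1}^m \chi_{I_k}\bigl((f-g)\circ S_k^{-1}\bigr)\rho_k.
\]

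Second, I would exploit that the segments $I_k$ have pairwise disjoint interiors. Consequently, for almost every $x\in(0,1)$ at most one indicator $\chi_{I_k}(x)$ is nonzero, so the sum above reduces pointwise to a single term. For $x$ in the interior of $I_k$ we have
\[
    |\mathcal{S}(f)(x)-\mathcal{S}(g)(x)|=\rho_k\,\bigl|(f-g)(S_k^{-1}(x))\bigr|\le \rho_k\,\|f-g\|_{L_\infty},
\]
since $S_k^{-1}$ maps $I_k$ bijectively onto $(0,1)$ and the essential supremum is preserved under this affine change of variable.

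Finally, taking the essential supremum over all $x\in(0,1)$ and using the disjointness once more gives
\[
    \|\mathcal{S}(f)-\mathcal{S}(g)\|_{L_\infty}\le \Bigl(\max_{1\le k\le m}\rho_k\Bigr)\,\|f-g\|_{L_\infty}.
\]
It remains to note that the contraction constant $q:=\max_k \rho_k$ is strictly less than $1$: since $m\ge 2$, all $\rho_k>0$, and $\sum_{k=1}^m\rho_k=1$, no single $\rho_k$ can attain the value $1$. The main (and essentially only) subtlety is precisely this disjoint-interiors argument, which prevents the weights from accumulating into the trivial bound $\sum_k\rho_k=1$ and instead yields the strict contraction factor $\max_k\rho_k$; everything else is routine. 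With $q<1$ established, $\mathcal{S}$ is a contraction, as claimed.
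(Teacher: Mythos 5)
Your proof is correct. The paper itself gives no argument for this proposition, simply citing \cite[Lemma 2.1]{Sh}; your reasoning --- cancellation of the $f$-independent terms $\chi_{\{x>b_k\}}\rho_k$ in the difference, reduction to a single summand almost everywhere via the disjointness of the interiors of the $I_k$, and the resulting contraction constant $\max_k\rho_k<1$ (strict because $m\ge 2$ and all $\rho_k>0$ sum to $1$) --- is exactly the standard argument.
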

Hence, by the Banach fixed-point theorem there exists a (unique) function $C\in L_{\infty}(0,1)$ such that $\mathcal{S}(C)=C$.
\begin{definition}
Such a function $C(t)$ will be called the \textit{generalized Cantor ladder} with $m$ steps.
\end{definition}
The function $C(t)$ can be found as the uniform limit of a sequence $\mathcal{S}^k(f)$ for \mbox{$f(t)\equiv t$}, which allows us to assume that it is continuous and monotone, and also $C(0)=0$, $C(1)=1$.
The derivative of the function $C(t)$ in the sense of distributions is a singular measure $\rho$ without atoms, self-similar in the sense of Hutchinson (see \cite{H}), i.e. it satisfies the relation
\begin{equation*}
\rho(E) = \sum\limits_{k=1}^m \rho_k \cdot \rho(S_k^{-1}(E\cap I_k)).
\end{equation*}
More general ways to construct self-similar functions are described in
\cite{Sh}.

\begin{definition}\label{arithm}
The self-similarity is called \textit{arithmetic} if
the logarithms of the values $\rho_k(b_k-a_k)$ are commensurable.
\end{definition}

\begin{definition}\label{even}
We call the generalized Cantor ladder \textit{even} if
\begin{equation*}
 \forall k=2,\dots, m\quad \rho_k=\rho_1=\frac{1}{m},
\quad b_k - a_k=b_1 - a_1,\quad a_k - b_{k-1}=a_2 - b_1.
\end{equation*}
This class of ladders is considered in \cite{VSh3}.
\end{definition}

In this paper the results of \cite{VSh3} are generalized to the following class of functions. We assume that $a_1=0$, $b_m=1$. We require that all intermediate segments have non-empty interior
($a_k-b_{k-1} \neq 0$), but do not impose any other restrictions on their size. In addition, we require the values $\rho_k(b_k-a_k)$ to be equal, i.e.
\begin{equation}\label{eq:2.1} \forall k = 1,\dots, m \quad \rho_k(b_k-a_k) = \tau
\end{equation} for some constant $\tau$. Barring this condition the values of $\rho_k$ and $b_k-a_k$ can be arbitrary.

\begin{remark}
For this class of functions the following relations between parameters hold:
\begin{equation}\label{period}
\nu = -\ln\tau, \quad \tau^{-D} = m.
\end{equation}
\end{remark}

\section{Spectral periodicity}\label{par:3}
We consider the formal boundary value problem
\begin{gather}\label{eq:3.1}
    -y''-\lambda\rho y=0,\\ \label{eq:3.2}
    y'(0)-\gamma_0 y(0)=y'(1)+\gamma_1 y(1)=0.
\end{gather}
We call the function $y \in W_2^1[0,1]$ its generalized solution if it satisfies the integral identity
$$ \int\limits_0^1 y'\eta'\, dx +
\gamma_0y(0)\eta(0)+\gamma_1y(1)\eta(1) = 
\lda\int_0^1 y\eta \;\rho(dx)  $$
for any $\eta \in W_2^1[0,1]$. Substituting functions \mbox{$\eta \in \Wo_2^1[0,1]$} into the integral identity, we establish that the derivative $y'$ is a primitive of a singular measure without atoms $\lda\rho y$, whence $y \in C^1[0,1]$. In addition, we need the following oscillation properties of the eigenfunctions.

\begin{proposition}\label{oscil} 
\textbf{\textsc{(\cite[Proposition~11]{V2})}}
Let $\{\lambda_n\}_{n=0}^{\infty}$ be a sequence
of eigenvalues of the boundary value
problem \eqref{eq:3.1}, \eqref{eq:3.2} numbered in ascending order. Then, regardless of the choice of index $n\in\mathbb N$, eigenvalue $\lambda_n$ is
simple, and corresponding eigenfunction does not vanish on the boundary of the segment \([0,1]\) and has exactly $n$ different zeros within this segment.
\end{proposition}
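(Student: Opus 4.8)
The plan is to establish all three assertions --- simplicity, non-vanishing at the endpoints, and the exact interior zero count --- by adapting the classical Sturm oscillation theory to the singular weight $\rho$ by means of a Pr\"ufer-type angle. First I would record the well-posedness of the initial value problem. Rewriting the equation as the Volterra system $y(x)=y(0)+\int_0^x y'(t)\,dt$ and $y'(x)=y'(0)-\lda\int_{[0,x]}y\,\rho(ds)$ (since $\rho$ has no atoms the choice of endpoint is irrelevant and $y'\in C[0,1]$), one sees that the solution is uniquely determined by, and depends continuously, indeed analytically, on the data $(y(0),y'(0))$ and the parameter $\lda$. In particular $y$ and $y'$ cannot vanish simultaneously at a point without $y\equiv 0$, and the solution space of the equation is two-dimensional.

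Given this, I would introduce the Pr\"ufer variables $y=r\sin\theta$, $y'=r\cos\theta$ with $r>0$ and $\theta\in C[0,1]$ of bounded variation. Using $dy=y'\,dx$ and $dy'=-\lda y\,\rho(dx)$, a direct computation yields the measure differential equation
\[
	d\theta=\cos^2\theta\,dx+\lda\sin^2\theta\,\rho(dx),
\]
whose right-hand side is non-negative for $\lda>0$; hence $\theta$ is non-decreasing along $[0,1]$. The left boundary condition $y'(0)=\gamma_0 y(0)$ fixes $\theta(0)$ as the unique value in $(0,\pi)$ with $\cot\theta(0)=\gamma_0$, and the right condition forces $\cot\theta(1)=-\gamma_1$, so neither $\theta(0)$ nor $\theta(1)$ is a multiple of $\pi$. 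Since $y(x)=0$ exactly when $\theta(x)\in\pi\mathbb{Z}$, this already gives the non-vanishing of the eigenfunction at both endpoints, while simplicity follows from the one-dimensionality of the space of solutions satisfying the left boundary condition.

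For the zero count I would observe that at any point with $\theta=k\pi$ one has $\sin\theta=0$, so the singular term drops and locally $d\theta=dx$; thus $\theta$ crosses each multiple of $\pi$ transversally and never rests on $\pi\mathbb{Z}$. Consequently the number of interior zeros of $y$ equals the number of multiples of $\pi$ strictly between $\theta(0)$ and $\theta(1)$. It then remains to track the dependence on $\lda$: fixing $\theta(0)$ and writing $\theta(\,\cdot\,;\lda)$, continuous dependence gives continuity of $\lda\mapsto\theta(1;\lda)$, the equation shows it is strictly increasing, and it tends to $+\infty$ as $\lda\to+\infty$. The eigenvalue condition $\theta(1;\lda)\equiv\theta(1)\pmod\pi$ therefore singles out an increasing sequence $\lda_0<\lda_1<\cdots$ with $\theta(1;\lda_n)=\theta(1)+n\pi$, for which the formula above delivers exactly $n$ interior zeros.

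I expect the main obstacle to lie in the singular character of $\rho$. In the classical theory the weight is a continuous positive function and $\theta$ solves a genuine ODE; here $\theta$ satisfies only a measure-driven equation, so I must justify its existence, continuity and bounded variation in that setting, and --- most delicately --- prove the strict monotonicity of $\theta(1;\lda)$ in $\lda$ despite $\rho$ being carried by a Lebesgue-null Cantor set. This reduces to showing that an eigenfunction cannot vanish $\rho$-almost everywhere, which would otherwise force $y$ to be affine on all of $[0,1]$ and, together with the boundary conditions, trivial. By contrast, establishing continuous dependence on $\lda$ for the Volterra system with singular kernel is routine once the integral formulation is in place.
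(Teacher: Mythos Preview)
The paper does not prove this proposition at all; it is simply quoted from \cite[Proposition~11]{V2} and used as a black box throughout. Your sketch reconstructs the Pr\"ufer-angle approach to Sturm oscillation theory in the measure-coefficient setting, which is exactly the method of the cited reference by Vladimirov, so in that sense you are following the ``paper's'' route by proxy.

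Your outline is essentially correct, and you have put your finger on the one genuinely non-classical point: strict monotonicity of $\lda\mapsto\theta(1;\lda)$ when $\rho$ is singular. The reduction you give is the right one --- if $\partial_\lda\theta(1;\lda)=0$ then $\sin\theta=0$ $\rho$-a.e., hence $y=0$ on $\operatorname{supp}\rho$, forcing $y'$ constant and $y$ affine; since $\operatorname{supp}\rho$ contains more than one point, $y\equiv 0$. Two small points to tighten: the derivation of the measure equation $d\theta=\cos^2\theta\,dx+\lda\sin^2\theta\,\rho(dx)$ uses a chain rule for Stieltjes integrals that is legitimate here only because $\rho$ is atomless (so $y,y'$ and hence $\theta$ are continuous), and you should say this explicitly; and the claim $\theta(1;\lda)\to+\infty$ needs a short argument (e.g.\ via the variational characterisation showing $N(\lda)\to\infty$, or a direct lower bound on the increment of $\theta$ across $\operatorname{supp}\rho$).
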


We now prove the main statements of this section.

\begin{theorem}\label{4.1}
Let $\{\lambda_n\}_{n=0}^{\infty}$ be a sequence of eigenvalues of the problem
\eqref{eq:1.1}, \eqref{eq:1.2} numbered in ascending order.
Then, regardless of the choice of index $n\in\mathbb N$, the following equality holds:
\begin{equation}\label{eq:4.1}
	\tau\lambda_{m n}=\lambda_n.
\end{equation}
\end{theorem}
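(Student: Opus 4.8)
The plan is to reduce the identity to the oscillation characterization of eigenvalues. By Proposition~\ref{oscil} in the Neumann case $\gamma_0=\gamma_1=0$, the eigenvalues are simple and $\lambda_n$ is characterized as the unique eigenvalue whose eigenfunction has exactly $n$ zeros inside $(0,1)$ and does not vanish at the endpoints. Hence it suffices to start from the eigenfunction $y_n$ associated with $\lambda_n$ and to construct explicitly a function $Y$ that is an eigenfunction of the same Neumann problem with eigenvalue $\lambda_n/\tau$ and with exactly $mn$ interior zeros. Once this is done, $Y$ must coincide up to a scalar with $y_{mn}$, so $\lambda_{mn}=\lambda_n/\tau$, which is \eqref{eq:4.1}.

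First I would record the scaling behaviour of the equation under the contractions $S_k$. Writing the integral identity for $y_n$ against test functions supported in $I_k$ and changing variables $x=S_k(t)$, the Dirichlet term acquires a factor $\ell_k^{-1}$ (with $\ell_k=b_k-a_k$), while the weight term acquires a factor $\rho_k$ from the self-similarity $\rho|_{I_k}=\rho_k\,(S_k)_*\rho$. Using the defining relation \eqref{eq:2.1}, namely $\rho_k\ell_k=\tau$, one finds that the rescaled function $y_n\circ S_k^{-1}$, considered on $I_k$, solves $-u''=(\lambda_n/\tau)\,\rho\,u$ there. Thus each contracted copy of $y_n$ is a local solution on $I_k$ at the enlarged spectral parameter $\lambda_n/\tau$.

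Next I would glue these copies across the gaps. I set $Y=c_k\,(y_n\circ S_k^{-1})$ on each $I_k$ and let $Y$ be affine on every complementary gap $(b_k,a_{k+1})$, where $\rho$ carries no mass. The decisive point is that $y_n$ satisfies the Neumann conditions $y_n'(0)=y_n'(1)=0$, so each contracted copy has vanishing derivative at both endpoints of $I_k$. Since an eigenfunction must lie in $C^1[0,1]$, the affine piece on each gap is forced to have zero slope, i.e.\ $Y$ is \emph{constant} on every gap; continuity then yields the recursion $c_{k+1}y_n(0)=c_k y_n(1)$, solved by $c_k=\alpha^{k-1}$ with $\alpha=y_n(1)/y_n(0)$, which is well defined and nonzero because $y_n$ does not vanish at the endpoints. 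With this choice $Y\in C^1[0,1]$ satisfies $Y'(0)=Y'(1)=0$, and, since $\rho$ has no atoms at the junctions, $Y$ solves $-Y''=(\lambda_n/\tau)\rho Y$ globally; hence $Y$ is a genuine eigenfunction at the parameter $\lambda_n/\tau$.

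Finally I would count the zeros of $Y$. On the interior of each $I_k$ the function $Y$ is a nonzero multiple of $y_n\circ S_k^{-1}$, contributing exactly $n$ zeros, while on each gap $Y$ equals a nonzero constant (again by the non-vanishing of $y_n$ at the endpoints), so no zeros are created on the gaps or at the junction points. This gives precisely $mn$ interior zeros, and $Y$ does not vanish at $0$ or $1$. Comparing with Proposition~\ref{oscil} identifies $Y$ with $y_{mn}$ and completes the argument. The main obstacle is the gluing step: it is exactly the Neumann boundary condition that forces the contracted eigenfunctions to match $C^1$-smoothly through the gaps and assemble into a global eigenfunction, whereas for the mixed conditions \eqref{eq:3.2} the derivatives at the interval ends need not vanish and one obtains only a weaker quasiperiodicity; the accompanying zero bookkeeping must also be carried out carefully, relying throughout on the fact that eigenfunctions do not vanish at the endpoints.
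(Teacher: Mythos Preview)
Your proof is correct and follows essentially the same route as the paper's: build $Y$ from rescaled copies $c_k\,(y_n\circ S_k^{-1})$ on the $I_k$, extend by constants across the gaps (the Neumann condition forces zero slope at both ends of each $I_k$), check via \eqref{eq:2.1} that $Y$ satisfies the Neumann problem at parameter $\lambda_n/\tau$, and identify it with $y_{mn}$ through the zero count and Proposition~\ref{oscil}. Your version simply spells out in more detail the scaling computation, the recursion $c_k=\bigl(y_n(1)/y_n(0)\bigr)^{k-1}$, and the zero bookkeeping on the gaps, all of which the paper leaves implicit.
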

\begin{proof}
The scheme of the proof repeats \cite[3.1.1]{VSh3}. Let's fix the eigenfunction $y_n$ corresponding to the eigenvalue $\lambda_n$. We construct a function $z\in C[0,1]$  satisfying the following conditions:
\begin{equation*}
z = c_k\cdot(y_n\circ S_k^{-1})\quad\text{ on }\quad I_k,
\end{equation*}
in addition, we continue it with constants on intermediate segments.
Non-zero values $c_k$ are chosen in such a way that the values of the function $z$ coincide on the ends of intermediate segments. It is easy to see that the resulting function is
continuously differentiable and satisfies the equality
$$z'(0)=z'(1)=0.$$
It is easy to check using \eqref{eq:2.1} that $z$ is an eigenfunction of the boundary value problem \eqref{eq:1.1}, \eqref{eq:1.2}, corresponding to eigenvalue $\tau^{-1}\lda_n$. In addition, it has exactly $mn$ zeros on the interval $[0,1]$ which leads to the equality \eqref{eq:4.1} (see Proposition \ref{oscil}).
\end{proof}

In \cite[3.1.2]{VSh3} the relation was also obtained between the eigenvalues with the numbers $n$ and $m(n+1)-1$ in certain problems with mixed boundary conditions of the form \eqref{eq:3.1}, \eqref{eq:3.2}. The same relation cannot be proved  in a more general case, so we'll derive a one-sided estimate, which we call
{\it spectral quasiperiodicity}.

Let's fix the eigenfunction $y_n$, corresponding to the eigenvalue $\lda_n$ of the problem with boundary condition
\[
	y'(0)-\gamma^{(1)}y(0)=y'(1)+\gamma^{(1)}y(1)=0.
\]
 
We construct the function $z$ as follows. Define
\[
	z = c_k\cdot(y_n\circ S_k^{-1}) 
	\quad\text{ on }\quad I_k,
\]
in addition, we will continue it smoothly with linear functions on intermediate segments till the intersection with the $x$-axis. If we set $\gamma^{(1)} = \max\limits_{k} \big(\frac{2}{|a_{k+1}-b_k|}\big)$, intersections will be close to the edges of $I_k$, and the function will remain undefined in the middle of intermediate segments. We will define it as zero on all remaining intervals.
We define the signs of non-zero parameters $c_k$ so that at each intermediate segment function $z$ has a change of sign. By \eqref{eq:2.1}, the resulting function satisfies the equation \eqref{eq:1.1} for $\lda = \tau^{-1}\lda_n$ almost everywhere. Unfortunately, it is not smooth.

We will make some continuous transformation with it which will not increase the value $\lda$ and will not change the number of changes of sign. As a result, we get a smooth eigenfunction of a certain boundary value problem and write the estimate of eigenvalues of this boundary value problem through $\lda_n$.

Our transformation consists of several steps. At the step $j$ the function $z$ is composed of eigenfunctions of boundary value problems on subsections $I_k$ with certain boundary conditions
\[
	z'(a_k)-\alpha_j^{(k)}z(a_k)=z'(b_k)+\beta_j^{(k)}z(b_k)=0
\]
and is continued linearly on intermediate segments. $z$ is smooth on some intermediate sections and piecewise linear on others.
We fix
$\alpha_j^{(1)}$ and $\beta_j^{(m)}$ and continuously change the rest of the values $\alpha_j^{(k)}$ and $\beta_j^{(k)}$ so that the eigenvalues, which correspond to the functions on segments $I_k$ remain equal, $z$ remains smooth on the intervals where the smoothness has already been achieved, and $\beta_j^{(k)}$, $\alpha_j^{(k+1)}$ decrease at the ends of the rest of the segments. This procedure decreases the value of $\lda$ by the variational principle and does not change the number of changes of sign according to Proposition \ref{oscil}.

At some point, at least on one intermediate segment zero interval of the function $z$ will collapse into one point. Assume that it occurred between the segments $I_l$ and $I_{l+1}$. At this point, we multiply $c_{l+1}$ and all subsequent coefficients by the same ratio so that $z$ becomes smooth on the interval $[a_l, b_{l+1}]$.

After $m-1$ step $z$ becomes completely smooth. After that we can reduce the parameters of boundary conditions at the ends so that
\[
	\alpha_m^{(1)} = \beta_m^{(m)} = \gamma^{(2)} := \gamma^{(1)}\cdot\min \{|I_1|, |I_m|\}.
\]
Note that the constructed function $z$ has exactly $m(n+1)-1$ roots and, hence, is an eigenfunction corresponding to the eigenvalue $\mu_{m(n+1)-1}$ of the boundary value problem
\[
	z'(0)-\gamma^{(2)}z(0)=z'(1)+\gamma^{(2)}z(1) = 0.
\]
Moreover, by construction, the resulting $\mu_{m(n+1)-1}$ does not exceed the original value of $\tau^{-1}\lda_n$. Thus, we have proved the following statement.

\begin{theorem}\label{4.2}
Let 
$\gamma^{(1)} = \max\limits_{k} \big(\frac{2}{|a_{k+1}-b_k|}\big)$,
$\gamma^{(2)} = \gamma^{(1)}\cdot\min \{|I_1|, |I_m|\}$.
Let $\{\lambda_n\}_{n=0}^{\infty}$ be the sequence of eigenvalues numbered in ascending order corresponding to the boundary value problem 
\[
	y'(0)-\gamma^{(1)}y(0)=y'(1)+\gamma^{(1)}y(1)=0
\]
for the equation \eqref{eq:3.1}.
Denote by $\{\mu_n\}_{n=0}^{\infty}$ a similar sequence corresponding to the boundary value problem
\[
	y'(0)-\gamma^{(2)}y(0)=y'(1)+\gamma^{(2)}y(1)=0
\]
for the same equation.
Then, regardless of the choice of index $n\in\mathbb N$, the following inequality holds:
\[
	\tau\mu_{m (n+1)-1}\leq\lda_n.
\]
\end{theorem}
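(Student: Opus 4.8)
The plan is to turn the eigenfunction $y_n$ of the $\gamma^{(1)}$-problem into a trial function for the $\gamma^{(2)}$-problem whose Rayleigh quotient is at most $\tau^{-1}\lambda_n$ and whose number of zeros is exactly $m(n+1)-1$; the asserted inequality then drops out of the variational principle combined with the oscillation property (Proposition \ref{oscil}). First I would set $z = c_k\,(y_n\circ S_k^{-1})$ on each $I_k$ and continue $z$ linearly across every gap $(b_k,a_{k+1})$ until it meets the $x$-axis, filling the remaining middle portion of the gap with zeros. Using \eqref{eq:2.1} one checks that the restriction of $z$ to each $I_k$ solves $-z''-\lambda\rho z=0$ with $\lambda=\tau^{-1}\lambda_n$, while on the gaps (where $\rho=0$) the piecewise-linear extension solves the equation almost everywhere; the factor $2$ in the definition of $\gamma^{(1)}$ guarantees that each zero-crossing falls strictly inside its gap, so the construction is well defined. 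Choosing the signs of the $c_k$ so that $z$ changes sign on every gap, I would count $n$ zeros in the interior of each of the $m$ copies plus one zero in each of the $m-1$ gaps, giving $m(n+1)-1$ zeros in total. At this stage $z$ realizes the correct eigenvalue $\tau^{-1}\lambda_n$ almost everywhere, but it is only piecewise $C^1$, with corners where the linear pieces meet the flat zero parts.

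The core of the argument is a continuous deformation removing these corners without increasing $\lambda$ or changing the number of sign changes. At each stage $z$ is glued from eigenfunctions on the $I_k$ subject to Robin data $z'(a_k)-\alpha^{(k)}z(a_k)=0$ and $z'(b_k)+\beta^{(k)}z(b_k)=0$, continued linearly on the gaps. Keeping $\alpha^{(1)}$ and $\beta^{(m)}$ fixed, I would continuously lower the interior parameters $\beta^{(k)}$, $\alpha^{(k+1)}$ while constraining the eigenvalues of all subproblems to remain equal (so that the pieces keep assembling into a solution of a single equation) and keeping already-smoothed gaps smooth. Since the eigenvalue of a Robin problem is monotone increasing in its boundary parameters, this motion only decreases the common value of $\lambda$, and by Proposition \ref{oscil} it preserves the zero count. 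When the flat part of some gap, say between $I_l$ and $I_{l+1}$, first collapses to a point, I would rescale $c_{l+1}$ and all later coefficients by a common factor to match values there, making $z$ genuinely $C^1$ across that gap. After $m-1$ such events every gap is smooth, so $z\in C^1[0,1]$ is an honest eigenfunction, and a final reduction of the two end parameters to the common value $\gamma^{(2)}$ keeps $\lambda$ non-increasing, because $\gamma^{(2)}=\gamma^{(1)}\min\{|I_1|,|I_m|\}$ does not exceed the initial end values $\gamma^{(1)}/|I_1|$ and $\gamma^{(1)}/|I_m|$.

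To conclude, the resulting $z$ is the eigenfunction of the $\gamma^{(2)}$-problem with exactly $m(n+1)-1$ zeros, hence by Proposition \ref{oscil} it corresponds to $\mu_{m(n+1)-1}$; since $\lambda$ never increased from its starting value $\tau^{-1}\lambda_n$, we obtain $\mu_{m(n+1)-1}\le\tau^{-1}\lambda_n$, that is $\tau\mu_{m(n+1)-1}\le\lambda_n$. The main obstacle I anticipate is making the simultaneous deformation rigorous: one must verify that the interior parameters can indeed be moved continuously subject to the ``equal eigenvalues'' and ``preserved smoothness'' constraints (a continuity, or implicit-function, argument for eigenvalues as functions of the Robin data), that the zero-crossings never leave their gaps nor merge with the interior zeros during the motion, and that the $m-1$ collapse events can be ordered and handled one at a time. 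Everything else is a routine application of the variational monotonicity of Robin eigenvalues and the sign-change bookkeeping above.
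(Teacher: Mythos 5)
Your proposal follows essentially the same route as the paper's own argument: the same rescaled and linearly extended trial function with sign changes forced in each gap, the same continuous deformation of the interior Robin parameters (equal eigenvalues, monotone decrease of $\lambda$, zero count preserved via Proposition \ref{oscil}, coefficients rescaled at each collapse event), and the same final reduction of the end parameters to $\gamma^{(2)}$ followed by identification of $z$ with the eigenfunction for $\mu_{m(n+1)-1}$. The technical points you flag as needing care are left at essentially the same level of detail in the paper itself.
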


\section{Specification of the spectrum characteristics}\label{par:4}
To prove Theorem \ref{Th1} we use the following facts:

\begin{proposition}\label{sing}
\textbf{\textsc{(\cite[Proposition~4.1.3]{VSh3})}}
Suppose that $f\in L_2[0,1]$ is a bounded non-decreasing function,
$\{f_n\}_{n=0}^{\infty}$ is a sequence of non-decreasing step functions and $\{\mathfrak A_n\}_{n=0}^{\infty}$ is the sequence of  discontinuity points of functions $f_n$. Suppose also that the following asymptotic relation holds as  $n\to\infty$:
\[
	(\#\mathfrak A_n+2)\cdot\|f-f_n\|_{L_2[0,1]}=o(1).
\]
Then the monotone function $f$ is purely singular.
\end{proposition}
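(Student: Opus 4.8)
The plan is to argue by contraposition: assuming $f$ is not purely singular, I will show that $(\#\mathfrak A_n+2)\cdot\|f-f_n\|_{L_2}$ stays bounded away from $0$. Fix $n$ and let $I_1,\dots,I_{M_n}$, with $M_n=\#\mathfrak A_n+1$, be the intervals of constancy of $f_n$, say $f_n\equiv c_i$ on $I_i$. Writing $V_i(g)=\min_{c\in\mathbb R}\int_{I_i}(g-c)^2\,dx$ for the squared $L_2$-distance of $g$ from the constants on $I_i$ (attained at the mean of $g$ over $I_i$), dropping the non-optimal choice $c_i$ gives
\[
	\|f-f_n\|_{L_2}^2=\sum_{i=1}^{M_n}\int_{I_i}(f-c_i)^2\,dx\geq\sum_{i=1}^{M_n}V_i(f).
\]
Thus it suffices to bound $\sum_i V_i(f)$ below by $c\,M_n^{-2}$ with $c>0$ independent of $n$, since $M_n\leq\#\mathfrak A_n+2$.

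Next I would reduce to a Lipschitz function. As $f$ is non-decreasing and not purely singular, its a.e. derivative satisfies $\int_0^1 f'\,dx>0$, so there exist $\alpha>0$ and a set $K\subseteq[0,1]$ with $f'\geq\alpha$ on $K$ and $\beta:=|K|>0$. I replace $f$ by the $\alpha$-Lipschitz function $g(x):=\alpha\,|K\cap[0,x]|$. The difference $f-g$ is again non-decreasing, because its absolutely continuous density $f'-\alpha\chi_K$ is non-negative (using $f'\geq\alpha$ on $K$ and $f'\geq0$ elsewhere) while the singular part of $f$ is non-decreasing. Hence for $x>y$ both $f(x)-f(y)$ and $g(x)-g(y)$ are non-negative with $(f(x)-f(y))^2\geq(g(x)-g(y))^2$; plugging this into the identity $V_i(h)=\tfrac1{2|I_i|}\int_{I_i}\int_{I_i}(h(x)-h(y))^2\,dx\,dy$ yields $V_i(f)\geq V_i(g)$ for every $i$. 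It remains to estimate $\sum_i V_i(g)$ for the Lipschitz $g$, whose increment over $I_i$ equals $\alpha\kappa_i$, where $\kappa_i:=|K\cap I_i|$.

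The key local estimate is $V_i(g)\geq\tfrac1{32}\alpha^2\kappa_i^3$. To see this, let $m_i$ be the mean of $g$ on $I_i$ and note that $g$ increases across $I_i$ by $\alpha\kappa_i$. The values lying outside the central band $(m_i-\tfrac14\alpha\kappa_i,\ m_i+\tfrac14\alpha\kappa_i)$ fill a value-range of total height $\tfrac12\alpha\kappa_i$; since $g$ is $\alpha$-Lipschitz, realizing these increments requires an $x$-set of measure at least $\tfrac12\kappa_i$. On that set $(g-m_i)^2\geq\tfrac1{16}\alpha^2\kappa_i^2$, whence $V_i(g)\geq\tfrac1{16}\alpha^2\kappa_i^2\cdot\tfrac12\kappa_i=\tfrac1{32}\alpha^2\kappa_i^3$. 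Finally, since the $I_i$ partition $[0,1]$ we have $\sum_i\kappa_i=|K|=\beta$, and convexity of $t\mapsto t^3$ (the power-mean inequality) gives $\sum_i\kappa_i^3\geq M_n^{-2}\bigl(\sum_i\kappa_i\bigr)^3=\beta^3 M_n^{-2}$. Chaining the bounds,
\[
	\|f-f_n\|_{L_2}^2\geq\sum_i V_i(f)\geq\sum_i V_i(g)\geq\tfrac{\alpha^2}{32}\sum_i\kappa_i^3\geq\frac{\alpha^2\beta^3}{32\,M_n^2},
\]
so $(\#\mathfrak A_n+2)\,\|f-f_n\|_{L_2}\geq M_n\|f-f_n\|_{L_2}\geq\alpha\beta^{3/2}/\sqrt{32}>0$ for all $n$, contradicting the hypothesis.

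I expect the main obstacle to be the local lower bound $V_i(g)\gtrsim\alpha^2\kappa_i^3$ together with the preceding comparison $V_i(f)\geq V_i(g)$: this is exactly where the absolute continuity of a non-singular part enters, since it forces the increase of $g$ to be spread over a set of definite measure and thereby prevents the local variance from collapsing, as it could for a purely singular (jump-concentrated) function. The subsequent power-mean step is then what converts the restriction on the number of discontinuity points into the decisive factor $M_n^{-2}$, and the rest of the argument is routine bookkeeping.
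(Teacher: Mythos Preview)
The paper does not prove this proposition at all: it is quoted verbatim from \cite[Proposition~4.1.3]{VSh3} and used as a black box in the proof of Theorem~\ref{Th1}. So there is no ``paper's own proof'' to compare against.

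That said, your argument is correct and self-contained. The contrapositive set-up is natural, the reduction $\|f-f_n\|_{L_2}^2\geq\sum_i V_i(f)$ is immediate, and the comparison $V_i(f)\geq V_i(g)$ via the double-integral variance identity is clean: since $f-g$ is non-decreasing (you correctly handle the singular part here), increments of $f$ dominate those of $g$ pointwise, hence so do the squared increments. The local bound $V_i(g)\geq\tfrac{1}{32}\alpha^2\kappa_i^3$ also checks out: the range of $g$ on $I_i$ has length exactly $\alpha\kappa_i$, so at least $\tfrac12\alpha\kappa_i$ of it lies outside any band of width $\tfrac12\alpha\kappa_i$, and the $\alpha$-Lipschitz condition converts that value-length into $x$-measure $\geq\tfrac12\kappa_i$ (the case $\kappa_i=0$ is trivial). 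The power-mean step $\sum_i\kappa_i^3\geq\beta^3 M_n^{-2}$ then yields the claimed uniform lower bound.

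One small remark: you write that the set $K$ with $f'\geq\alpha$ can be chosen measurable with $|K|=\beta>0$; this is standard (take $K=\{f'\geq\alpha\}$ for small enough $\alpha$), but it is worth noting that you only use $f'\geq\alpha$ on $K$ in the a.e.\ sense, which is exactly what the Lebesgue differentiation theorem gives. Otherwise the proof is complete and, absent the original argument in \cite{VSh3}, stands on its own.
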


\begin{proposition}\label{5.2}
\textbf{\textsc{(\cite[Proposition~5.2.1]{VSh3})}}
Let $\{\lambda_n\}_{n=0}^{\infty}$ be a sequence of the eigenvalues of boundary value problem
\begin{gather*}
	-y''-\lambda\rho y=0,\\ 
	y'(0)=y'(1)=0,
\end{gather*}
numbered in ascending order.
Let $\{\mu_n\}_{n=0}^{\infty}$ be a similar sequence corresponding to the boundary value problem
\[
	y'(0)-\gamma_0y(0)=y'(1)+\gamma_1y(1)=0
\]
with $\gamma_0,\,\gamma_1\geq 0$ for the same equation. Then
\[
	\sum\limits_{n=1}^{\infty} |\ln\mu_n-\ln\lambda_n|<+\infty.
\]
\end{proposition}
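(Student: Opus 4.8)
The plan is to treat the two boundary terms one at a time and reduce everything to quantitative first–order perturbation theory expressed through the Weyl function. Since $\gamma_0,\gamma_1\ge0$, I first introduce the intermediate problem carrying the Robin condition $y'(0)-\gamma_0y(0)=0$ at the left end together with the Neumann condition $y'(1)=0$ at the right, and denote its eigenvalues by $\{\nu_n\}$. By the triangle inequality it suffices to bound $\sum_{n\ge1}|\ln\nu_n-\ln\lambda_n|$; the passage from $\nu_n$ to $\mu_n$ is the symmetric rank-one perturbation at the right endpoint and is handled identically. Thus I reduce to a single rank-one boundary perturbation: the Neumann eigenvalues $\lambda_n$ against the eigenvalues $\nu_n$ of the condition $y'(0)=\gamma y(0)$, $\gamma\ge0$. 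Adding $\gamma\,y(0)^2\ge0$ to the quadratic form only increases it, so by the variational principle $\nu_n\ge\lambda_n$ and every term is nonnegative. The index $n=0$ is excluded precisely because $\lambda_0=0$ for the Neumann problem, which is why the stated sum begins at $n=1$.

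Next I set up the Weyl function. Let $u(\cdot,\lambda)$ solve \eqref{eq:1.1} with $u(1,\lambda)=1$, $u'(1,\lambda)=0$, and put $M(\lambda)=-u'(0,\lambda)/u(0,\lambda)$. Then the $\lambda_n$ are the zeros of $M$ (where $u'(0,\lambda)=0$, so the Neumann condition also holds at the left end), while the $\nu_n$ solve $M(\lambda)=-\gamma$. Differentiating the equation in $\lambda$ and using the Wronskian identity $\frac{d}{dx}(u'\dot u-u\dot u')=\rho\,u^2$ (with $\dot u=\partial_\lambda u$) together with the $\lambda$-independent data at $x=1$, I obtain
\[
M'(\lambda)=-\frac{\int_0^1\rho\,u(x,\lambda)^2\,dx}{u(0,\lambda)^2}<0 .
\]
Hence $M$ is strictly decreasing between its poles, the solution $\nu_n$ of $M=-\gamma$ lies in the gap just to the right of $\lambda_n$ and before the next pole, and at the eigenvalue itself $|M'(\lambda_n)|=1/\widehat u_n(0)^2$, where $\widehat u_n=u(\cdot,\lambda_n)/\|u(\cdot,\lambda_n)\|_{L_2(\rho)}$ is the $L_2(\rho)$-normalized Neumann eigenfunction. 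Integrating $M'$ yields the exact relation $\gamma=\int_{\lambda_n}^{\nu_n}|M'(t)|\,dt$, from which I aim to extract the per-term estimate
\[
\ln\nu_n-\ln\lambda_n\ \le\ C\,\frac{\widehat u_n(0)^2}{\lambda_n}.
\]

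Finally I sum. Since the $\widehat u_n$ are orthonormal in $L_2(\rho)$ with eigenvalues $\lambda_n$, Mercer's theorem applied to the continuous positive-definite Green's function $G$ of the Neumann problem on the orthogonal complement of the constants gives $\sum_{n\ge1}\widehat u_n(0)^2/\lambda_n=G(0,0)<\infty$, and the per-term bound then closes the argument. (Alternatively, once $\widehat u_n(0)^2$ is shown to be bounded, one may simply invoke $\lambda_n\asymp n^{1/D}$ from \eqref{eq:mes_asymp}, which gives $\sum 1/\lambda_n<\infty$ because $D<\tfrac12$.) The main obstacle is the per-term estimate of the preceding paragraph: for fixed $\gamma$ and large $n$ the shift $\nu_n-\lambda_n$ must be controlled beyond its linear approximation $\gamma\,\widehat u_n(0)^2$, which amounts to bounding the variation of $|M'|$ across the short interval $[\lambda_n,\nu_n]$ — equivalently, confirming that $\nu_n$ stays close enough to $\lambda_n$ that $|M'(t)|\ge c\,|M'(\lambda_n)|$ there. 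Guarding against the scenario in which $\lambda_n$ sits near the neighbouring pole of $M$ is the delicate point, and is where the interlacing from Proposition \ref{oscil} and the behaviour of $\rho$ near the endpoint enter.
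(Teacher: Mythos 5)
The paper offers no proof of this proposition at all --- it is imported verbatim from \cite[Proposition~5.2.1]{VSh3} --- so the only thing to judge is whether your argument stands on its own, and it does not close. Everything hinges on the per-term estimate $\ln\nu_n-\ln\lambda_n\le C\,\widehat u_n(0)^2/\lambda_n$, and that estimate, which you yourself flag as ``the delicate point'', is essentially the whole content of the proposition rather than a technicality. From $\gamma=\int_{\lambda_n}^{\nu_n}|M'(t)|\,dt$ you only get $\nu_n-\lambda_n\le\gamma/\min_{[\lambda_n,\nu_n]}|M'|$, and the Herglotz representation $-M(\lambda)=A+\sum_k c_k(\pi_k-\lambda)^{-1}$ (with poles $\pi_k$ at the eigenvalues of the Dirichlet-at-$0$ problem, interlacing the $\lambda_n$) gives $|M'(\lambda)|=\sum_k c_k(\pi_k-\lambda)^{-2}$. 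When $\lambda_n$ happens to sit very close to the preceding pole $\pi_{n-1}$, the value $|M'(\lambda_n)|$ is dominated by that one nearby term, which decays rapidly as $\lambda$ moves to the right; so $|M'|$ can drop far below $|M'(\lambda_n)|$ inside $[\lambda_n,\nu_n]$, and the linearization $\nu_n-\lambda_n\approx\gamma\,\widehat u_n(0)^2$ need not be a one-sided bound with a uniform constant. Nothing in your proposal excludes this scenario, and for a singular self-similar $\rho$ the spectral gaps are irregular enough that it cannot be dismissed. The Mercer step $\sum_{n\ge1}\widehat u_n(0)^2/\lambda_n<\infty$ is fine, but without the per-term bound it proves nothing about $\sum(\ln\nu_n-\ln\lambda_n)$.

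A route that avoids per-term control altogether (and is, judging by the remark on regularized products of eigenvalues that follows the proposition, close to the one behind \cite{VSh3}) is global: your reductions --- one endpoint at a time, and $\nu_n\ge\lambda_n$ by the variational principle --- mean all terms have one sign, so the sum converges iff the product $\prod_n(\nu_n/\lambda_n)$ does. The functions $\lambda\mapsto u'(0,\lambda)$ and $\lambda\mapsto u'(0,\lambda)-\gamma u(0,\lambda)$ are entire of order $D<\tfrac12$ by \eqref{eq:mes_asymp}, hence of genus zero, and admit Hadamard factorizations over their zero sets $\{\lambda_n\}$ and $\{\nu_n\}$. After factoring out the zero $\lambda_0=0$, their ratio along the negative real axis equals $1+\gamma/M(\lambda)$ up to an explicit elementary factor and tends to a finite nonzero limit as $\lambda\to-\infty$, while each term of $\sum_n\ln\bigl((1-\lambda/\nu_n)/(1-\lambda/\lambda_n)\bigr)$ decreases monotonically to $\ln(\lambda_n/\nu_n)$; monotone convergence then yields $\sum_n\ln(\nu_n/\lambda_n)<+\infty$ directly, with no need to compare $\nu_n-\lambda_n$ with $\widehat u_n(0)^2$ at all.
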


\begin{remark}
More general results about regularized products of eigenvalues were also considered in \cite{Naz2}.
\end{remark}

{\it Proof of Theorem \ref{Th1}:}

Let $\{\lambda_n\}_{n=0}^{\infty}$, $\{\mu^{(1)}_n\}_{n=0}^{\infty}$ and $\{\mu^{(2)}_n\}_{n=0}^{\infty}$ be sequences of eigenvalues of equation \eqref{eq:1.1} numbered in ascending order and corresponding to the boundary value problems
\begin{align*}
	\lambda_n:&\quad y'(0)=y'(1)=0,\\
	\mu^{(1)}_n:&\quad y'(0)-\gamma^{(1)}y(0)=y'(1)+\gamma^{(1)}y(1)=0,\\
	\mu^{(2)}_n:&\quad y'(0)-\gamma^{(2)}y(0)=y'(1)+\gamma^{(2)}y(1)=0,
\end{align*}
where the parameters $\gamma^{(1)}$ and $\gamma^{(2)}$ were introduced in Theorem
\ref{4.2}.

We set $\sigma_k(t) := m^{-k}N(e^{k\nu+t})$, where
$N$ is a counting function for $\lambda_n$.
By the relation \eqref{eq:mes_asymp} for all $t\in [0,\nu]$ we have the equality
$\sigma(t)=\lim\limits_{k\to\infty}\sigma_k(t)$. Note also that $\sigma_k$ and $\sigma_{k+1}$ differ by no more than $m^{-k}$ for all $t\in [0,\nu]$. 

In view of the equalities \eqref{eq:4.1} and \eqref{period}, regardless of the choice of index $k\in\mathbb N$, values of the functions $\sigma_k(t)$ and $\sigma_{k+1}(t)$ coincide for all $t\in [0,\nu]$, satisfying for some
$n\in\mathbb N$ the inequality
\begin{equation}\label{eq:5.1}
	\lambda_{m (n+1)-1} < 
	e^{(k+1)\nu+t} < \lambda_{m (n+1)}.
\end{equation}
Let's estimate the measure of the set of all other points $t$. If $t \in [0, \nu]$ does not admit \eqref{eq:5.1}, then the following relation holds:
\[
(k+1)\nu + t \in \Big( 
\bigcup\limits_{n=0}^\infty 
\left[\ln\lda_{mn}, \ln\lda_{m(n+1)-1}\right]
\Big) \cap [(k+1)\nu, (k+2)\nu].
\]
Let's estimate the sequence of partial sums of the series
\[
	\sum\limits_{n=1}^{\infty}|\ln\lambda_{m (n+1)-1}-
		\ln\lambda_{m n}| \leq 
		\sum\limits_{n=1}^{\infty}|\ln\lambda_{m (n+1)-1}-
		\ln\mu^{(2)}_{m (n+1)-1}| +
		\sum\limits_{n=1}^{\infty}|\ln\mu^{(2)}_{m (n+1)-1}-
		\ln\lambda_{mn}|.
\]
We estimate these sums separately.
\[
	\sum\limits_{n=1}^{\infty}|\ln\lambda_{m (n+1)-1}-
		\ln\mu^{(2)}_{m (n+1)-1}| \leq
		\sum\limits_{n=1}^{\infty}|\ln\lambda_{n}-
		\ln\mu^{(2)}_{n}| \leq C.
\]
Here the first inequality is obtained by expanding the set of terms. The second one
is an application of the Proposition \ref{5.2}.
\begin{align*}
	\sum\limits_{n=1}^{\infty}|\ln\mu^{(2)}_{m (n+1)-1}-
	\ln\lambda_{mn}| &=
	\sum\limits_{n=1}^{\infty}\ln\mu^{(2)}_{m (n+1)-1}-
	\ln\lambda_{mn} \\
	&\leq \sum\limits_{n=1}^{\infty}\ln\mu^{(1)}_{n}-
	\ln\lambda_{n} =
	\sum\limits_{n=1}^{\infty}|\ln\mu^{(1)}_{n}-
	\ln\lambda_{n}| \leq C.
\end{align*}
Here the first inequality follows from Theorems \ref{4.1} and \ref{4.2}, the second inequality follows from Proposition \ref{5.2}, while the equalities follow from the estimates
\[
	\mu^{(2)}_{m(n+1)-1} > \mu^{(2)}_{mn} > \lda_{mn}, \quad
	\mu^{(1)}_n > \lda_n.
\]
Thus we have shown that the measure of the set
$\bigcup\limits_{n=0}^\infty 
\left[\ln\lda_{mn}, \ln\lda_{m(n+1)-1}\right]$
is bounded, which means that after intersecting with segments \mbox{$[(k+1)\nu, (k+2)\nu]$} we obtain
\[
	\mathrm{meas}\,\{t\in [0,\nu]\::\:\sigma_{k+1}(t)\neq\sigma_k(t)\} =
	o(1), \quad k\to\infty.
\]
Consequently, the following estimate holds:
\[\|\sigma_{k+1}-\sigma_k\|_{L_2[0,\nu]}=o(m^{-k}),\]
and so does the asymptotics
\[\|\sigma_k-\sigma\|_{L_2[0,\nu]}=o(m^{-k}),\]
derived therefrom.
Let's make sure that the number of discontinuity points of the function $\sigma_k$ admits the estimate $O(m^k)$ as $k\to\infty$. Using the relation \eqref{eq:4.1}, we obtain the following inequality:
\[
m^{k+c} + 1 = N(\lda_{m^{k+c}}) = N(\tau^{-(k+c)}\lda_1) >
N(e^{k\nu+t}),
\]
where $c > \nu^{-1}(1-\ln \lda_1)$ is an integer. It remains to note that the number of discontinuity points of the function $N(\lda)$ on the segment does not exceed its value on the right end.

Thus, the function $\sigma$ along with a sequence of piecewise constant approximations $\sigma_k$ satisfy all of the conditions of the Proposition \ref{sing}, which proves the theorem.

\bigskip
\bigskip

This research is supported by the Chebyshev Laboratory  (Department of Mathematics and Mechanics, St. Petersburg State University) under RF Government grant 11.G34.31.0026, by Russian Foundation for Basic Research (project 13-01-00172A), by St. Petersburg State University grant N6.38.64.2012 and by JSC "Gazprom Neft".

\bigskip
\bigskip

\begin{enbibliography}{99}
\addcontentsline{toc}{section}{References}

\bibitem{SV} Solomyak~M., Verbitsky~E. \emph{On a spectral problem related to
self-similar measures}
// Bull. London Math.~Soc. --- 1995. --- V.~27, N~3. ---
P.~242-248.

\bibitem{VSh3} Vladimirov~A.~A., Sheipak~I.~A. \emph{On the Neumann Problem for the Sturm--Liouville Equation with Cantor-Type Self-Similar Weight}// Functional Analysis and Its Applications. --- 2013. --- V.~47, N.~4. --- P.~261--270.

\bibitem{BS2} Birman~M.~Sh., Solomyak~M.~Z. \emph{Spectral theory of self-adjoint operators in Hilbert space} // 
Ed.2, Lan' publishers. --- 2010. (in Russian); English translation of the 1st ed: Mathematics and its Applications (Soviet Series). D. Reidel Publishing Co., Dordrecht, 1987.

\bibitem{K} Krein~M.~G. \emph{Determination of the density of the symmetric inhomogeneous string by spectrum}//
Dokl. Akad. Nauk SSSR --- 1951. --- V.~76, N.~3. --- P.~345-348. (in Russian)

\bibitem{BS1} Birman~M.~Sh., Solomyak~M.~Z. \emph{Asymptotic behavior of the spectrum of weakly polar integral operators}// Mathematics of the USSR-Izvestiya. --- 1970. --- V.~4, N.~5. --- P.~1151-1168.

\bibitem{B} Borzov~V.~V. \emph{On the quantitative characteristics of singular measures}// Problems of math. physics. --- 1970. --- V.~4.
--- P.~42-47. (in Russian)

\bibitem{F} Fujita~T. \emph{A fractional dimention, self-similarity
and a generalized diffusion operator} // Taniguchi Symp. PMMP. Katata.
--- 1985. --- P.~83-90.

\bibitem{HU} Hong~I., Uno~T. \emph{Some consideration of asymptotic 
distribution of eigenvalues for
the equation $d^2u/dx^2 + \lda\rho(x)u = 0$}// Japanese Journ. of Math.
--- 1959. --- V.~29. --- P.~152-164.

\bibitem{MR} McKean~H.~P., Ray~D.~B. \emph{Spectral distribution of a differential operator}// Duke Math. Journ. --- 1962. ---
V.~29. --- P.~281-292.

\bibitem{KL} J.~Kigami, M.~L.~Lapidus. \emph{Weyl's problem for the spectral 
distributions of Laplacians on p.c.f. self-similar fractals}// Comm. Math. Phys. ---
1991. --- V.~158. --- P.~93-125.

\bibitem{Naz} Nazarov~A.~I. \emph{Logarithmic \(L_2\)-small ball asymptotics with respect to self-similar measure for some Gaussian processes}// Journal of Mathematical Sciences (New York). --- 2006. --- V.~133, N.~3. --- P.~1314-1327.

\bibitem{VSh1} Vladimirov~A.~A., Sheipak~I.~A. \emph{Self-similar functions in \(L_2[0,1]\) and the Sturm--Liouville problem with singular indefinite weight}// Sbornik: Mathematics. --- 2006. --- V.~197, N.~11. --- P.~1569-1586.

\bibitem{V3} Vladimirov~A.~A. \emph{Method of oscillation and spectral problem for four-order differential operator with self-similar weight}// \texttt{arxiv:1107.4791} (in Russian)

\bibitem{Sh} Sheipak~I.~A. \emph{On the construction and some properties of self-similar functions in the spaces \(L_p[0,1]\)}//  Mathematical Notes. --- 2007. ---
 V.~81, N.~6. --- P.~827-839.

\bibitem{H} Hutchinson~J.~E. \emph{Fractals and self similarity}//
Indiana Univ. Math. J. --- 1981. --- V.~30, N~5. --- P.~713-747.

\bibitem{V2} Vladimirov~A.~A. \emph{On the oscillation theory of the Sturm--Liouville problem with singular coefficients}// Computational Mathematics and Mathematical Physics. --- 2009. --- V.~49, N.~9. --- P.~1535-1546.

\bibitem{Naz2} Nazarov~A.~I. \emph{On a set of transformations of Gaussian random functions}// Theory of Probability and its Applications. --- 2010. --- 
V.~54, N.~2. --- P.~203-216.

\end{enbibliography}

\end{document}